\numberwithin{equation}{section}
\title{A note on small deformations of balanced manifolds}
\author{Jixiang Fu}
\author{Shing-Tung Yau}
\address{Institute of Mathematics\\ Fudan University \\ Shanghai
200433, China} \email{majxfu@fudan.edu.cn}
\address{Department of Mathematics\\ Harvard University\\ Cambridge,
MA 02138\\ USA} \email{yau@math.harvard.edu}
\newtheorem{prop}{Proposition}
\newtheorem{theo}[prop]{Theorem}
\newtheorem{coro}[prop]{Corollary}
\newtheorem{defi}[prop]{Definition}
\begin{document}
\begin{abstract}
In this note  we prove that, under a weak condition, small
deformations of a compact balanced manifold are also balanced.  This
condition is satisfied on the twistor space over a compact self-dual
four manifold.
\end{abstract}
\maketitle

\section{Introduction}
Let $(X,J)$ be a compact complex $n$-dimensional manifold. Let $g$
be a hermitian metric on $X$ and $\omega$ be the hermitian form
associated to $g$. If $d\omega^{n-1}=0$, then $g$ or $\omega$ is
called a {\sl balanced} metric. A complex manifold is called a
balanced manifold if it admits a balanced metric. The balanced
metric was first studied thoroughly by Michelsohn \cite{Mic}. She
especially proved that there exists an intrinsic characterization of
compact complex manifolds with balanced metrics by means of positive
currents.
\begin{theo} \textup{\cite{Mic}}
A compact complex manifold $X$ admits a balanced metric if and only
if any positive current $T$ on $X$ of degree $(1,1)$ must be zero if
it is the component of a boundary \textup{(}i.e., if there exists a current $S$
such that $T=\partial\bar S+\bar\partial S$\textup{)}.
\end{theo}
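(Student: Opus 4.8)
The plan is to deduce the characterization from a Hahn--Banach separation argument, in the spirit of Sullivan's treatment of cycle spaces, together with one piece of pointwise linear algebra. The linear-algebra input is Michelsohn's lemma: at each point a real $(n-1,n-1)$-form is strictly positive if and only if it equals $\omega^{n-1}$ for a unique positive definite $(1,1)$-form $\omega$. Granting this, I would work in the Fréchet space $V=\mathcal A^{\,n-1,n-1}_{\mathbb R}(X)$ of smooth real $(n-1,n-1)$-forms, whose topological dual is the space of real currents of bidegree $(1,1)$. Let $C\subset V$ be the set of forms that are strictly positive at every point: it is open, nonempty (it contains $\omega_0^{n-1}$ for any Hermitian $\omega_0$), and convex, since sums and positive multiples of strictly positive forms are strictly positive. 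Let $Z=\{\beta\in V:d\beta=0\}$, a closed linear subspace. By the lemma, $C\cap Z\neq\varnothing$ precisely when $X$ carries a balanced metric: an element $\Omega\in C\cap Z$ is $\omega^{n-1}$ for a positive definite $\omega$ with $d\omega^{n-1}=0$, and conversely a balanced $\omega$ gives $\omega^{n-1}\in C\cap Z$. So the theorem becomes the statement that $C\cap Z=\varnothing$ if and only if some nonzero positive $(1,1)$-current is a component of a boundary.

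For the direction ``balanced $\Rightarrow$ no such current'' I would argue directly: if $\Omega=\omega^{n-1}\in C\cap Z$ and $T=\partial\bar S+\bar\partial S$ is a positive $(1,1)$-current, then $\langle T,\Omega\rangle=\pm\langle\bar S,\partial\Omega\rangle\pm\langle S,\bar\partial\Omega\rangle=0$, because $d\Omega=0$ forces $\partial\Omega=\bar\partial\Omega=0$ by bidegree. But $\Omega=\omega^{n-1}$ is a strictly positive test form and $T\geq0$, so $\langle T,\Omega\rangle=0$ forces $T=0$ (the mass of $T$ relative to $\omega$ vanishes).

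For the converse, suppose $C\cap Z=\varnothing$. Since $C$ is a nonempty open convex set disjoint from the convex set $Z$, the geometric Hahn--Banach theorem yields a nonzero continuous linear functional on $V$, i.e.\ a nonzero real $(1,1)$-current $T$, bounded on one side on $C$ and on the other on $Z$; as $Z$ is a linear subspace, $T$ vanishes identically on $Z$, and as $C$ is a cone we may normalize so that $\langle T,\Omega\rangle\geq0$ for all $\Omega\in C$. Every positive $(n-1,n-1)$-form is a limit of elements of $C$ (add $\varepsilon\,\omega_0^{n-1}$), so $T$ pairs nonnegatively with all positive $(n-1,n-1)$-test forms, i.e.\ $T$ is a nonzero positive $(1,1)$-current. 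It remains to see $T$ is a component of a boundary. The vanishing of $T$ on $Z$, together with the trivial vanishing of $T$ on closed $(2n-2)$-forms of every other bidegree (wrong type to pair with a $(1,1)$-current), shows $T$ annihilates every closed smooth $(2n-2)$-form; evaluating on $d\gamma$ gives $dT=0$, so $\langle T,\cdot\rangle$ vanishes on $H^{2n-2}(X;\mathbb R)$ and Poincar\'e duality forces the de Rham class $[T]\in H^2(X;\mathbb R)$ to be $0$. Hence $T=d\alpha$ for some $1$-current $\alpha$ (currents compute de Rham cohomology); replacing $\alpha$ by its real part and writing $\alpha=S+\bar S$ with $S$ of bidegree $(1,0)$, comparison of bidegrees in $T=d\alpha$ kills the $(2,0)$- and $(0,2)$-parts (equivalently $\partial S=0$), leaving $T=\bar\partial S+\partial\bar S$. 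Thus $T$ is a nonzero positive $(1,1)$-current that is a component of a boundary, which is what we wanted.

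The main obstacle, apart from routine bookkeeping, is the interface between the two halves: Michelsohn's pointwise positivity lemma, which is what makes $C$ a \emph{convex} body genuinely tied to balanced metrics, and the cohomological step that turns ``$T$ annihilates closed $(2n-2)$-forms'' into ``$T$ is $d$-exact'' via Poincar\'e duality for currents on a compact manifold. Once $T=d\alpha$ is in hand, extracting the component-of-a-boundary form is just type decomposition using that $T$ is real and of pure bidegree $(1,1)$.
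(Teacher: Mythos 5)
Your overall framework---Hahn--Banach separation of the open convex cone $C$ of strictly positive real $(n-1,n-1)$-forms from the closed subspace $Z$ of $d$-closed ones, Michelsohn's pointwise lemma identifying $C\cap Z$ with balanced metrics, and the easy direction via $\langle T,\omega^{n-1}\rangle=0$---is the standard route to this theorem, and those parts are sound. (Note the paper itself only quotes the statement from \cite{Mic}, so there is no internal proof to compare with.) The genuine gap is the step converting ``$T$ vanishes on $Z$'' into ``$T$ annihilates every closed smooth $(2n-2)$-form.'' A closed real $(2n-2)$-form $\beta=\beta^{n-2,n}+\beta^{n-1,n-1}+\beta^{n,n-2}$ is \emph{not} a sum of closed pure-type pieces: $d\beta=0$ only yields relations such as $\bar\partial\beta^{n-1,n-1}+\partial\beta^{n-2,n}=0$, and $\beta^{n-1,n-1}$ need not lie in $Z$; so pairing $T$ with $\beta$ gives $\langle T,\beta^{n-1,n-1}\rangle$, about which you know nothing. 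Consequently you cannot deduce $dT=0$, nor $[T]=0$, nor $T=d\alpha$. Indeed your intended conclusion is too strong: a current $T=\partial\bar S+\bar\partial S$ satisfies $dT=\partial\bar\partial(S-\bar S)$, which need not vanish, so the current produced by separation is in general neither closed nor exact---this is precisely why the theorem says ``component of a boundary'' rather than ``boundary.''

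What the duality actually gives is this: $T$ annihilates $\ker\bigl(d|_{\mathcal{A}^{n-1,n-1}_{\mathbb R}}\bigr)$, and the transpose of the map $\varphi\mapsto d\varphi=(\partial\varphi,\bar\partial\varphi)$ sends a degree-one current $S'=S+\bar S$ to the $(1,1)$-component of $dS'$, namely $\partial\bar S+\bar\partial S$. Hence the separation argument places $T$ only in the weak closure of the set of components of boundaries, and to remove the closure one needs a closed-range statement for this operator; Michelsohn (and, in the same spirit, Alessandrini--Bassanelli) obtain it from elliptic/Hodge theory for an associated fourth-order operator of Kodaira--Spencer/Aeppli type, akin to the operator $E_t$ used in Section 2 of this paper. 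Replacing your Poincar\'e-duality shortcut by this annihilator-equals-image argument is the missing ingredient; the rest of your proposal (the cone geometry, the positivity of $T$, and the type decomposition once a preimage current is in hand) is correct.
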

Using this characterization, L. Alessandrini and G. Bassanelli
proved that the existence of balanced metrics is stable under
modifications.
\begin{theo} \textup{\cite{AB2,AB3}}
Let $f:\bar X\to X$ be a proper modification of a compact complex
manifold $X$. Then $X$ admits a balanced metric if and only if $\bar
X$ admits a balanced metric.
\end{theo}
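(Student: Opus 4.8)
The plan is to deduce both implications from Michelsohn's current criterion (the theorem above), applied once to $X$ and once to $\bar X$, by transporting the obstructing current through $f$. Write $Y\subset X$ for the nowhere dense analytic center of the modification and $E=f^{-1}(Y)\subset\bar X$ for the exceptional set, so that $f$ restricts to a biholomorphism $\bar X\setminus E\xrightarrow{\ \sim\ }X\setminus Y$; both $X$ and $\bar X$ are compact of the same dimension $n$. Call a current \emph{obstructing} if it is a nonzero positive $(1,1)$-current that is a component of a boundary, i.e. of the form $\partial\bar S+\bar\partial S$; by Michelsohn's theorem it is enough to show that $X$ carries an obstructing current exactly when $\bar X$ does. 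The facts I would use are: (i) since $f$ is proper and equidimensional, the direct image $f_*$ sends a positive current to a positive current of the same bidegree, preserves finiteness of mass, and commutes with $\partial$ and $\bar\partial$; (ii) on the biholomorphism locus currents can be transported both ways, and a positive current with locally finite mass near an analytic set extends trivially by zero across it; (iii) the support theorem for positive currents together with the Skoda--El Mir/Siu structure theorems for positive currents carried by analytic sets.

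First I would prove ``$X$ balanced $\Rightarrow$ $\bar X$ balanced''. Assume $X$ is balanced and, for contradiction, that $\bar X$ is not; pick an obstructing current $\bar T=\partial\overline{\bar S}+\bar\partial\bar S$ on $\bar X$. Then $f_*\bar T\ge 0$ is a $(1,1)$-current on $X$ with $f_*\bar T=\partial f_*\overline{\bar S}+\bar\partial f_*\bar S$, so it is either obstructing on $X$ -- impossible -- or identically zero; and if $f_*\bar T=0$ then, testing against $f^*$ of test forms supported in $X\setminus Y$, one gets $\bar T=0$ on $\bar X\setminus E$, that is, $\operatorname{supp}\bar T\subseteq E$. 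So this direction comes down to ruling out an obstructing current supported in $E$.

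For ``$\bar X$ balanced $\Rightarrow$ $X$ balanced'' I would argue symmetrically, but now via pullback, which is the delicate side. Assume $\bar X$ is balanced and $X$ is not, with obstructing current $T=\partial\bar S+\bar\partial S$ on $X$. Transport $T$ and $S$ through $f^{-1}$ over $X\setminus Y$ and take their trivial extensions $\widetilde T,\widetilde S$ across $E$; using that $f$ is a modification one must check that $\widetilde T$ is a positive $(1,1)$-current of finite mass, and on $\bar X\setminus E$ one has $\widetilde T=\partial\overline{\widetilde S}+\bar\partial\widetilde S$. One then has to show this equation persists on all of $\bar X$ -- after possibly correcting $\widetilde S$ by a current carried by $E$ -- so that $\widetilde T$ is an obstructing current on $\bar X$, contradicting that $\bar X$ is balanced; the only escape is $\widetilde T=0$, i.e. $\operatorname{supp}T\subseteq Y$. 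So again everything reduces to excluding an obstructing current supported in the center / exceptional locus of the modification.

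The hard part will be exactly this excluded case: that a nonzero positive $(1,1)$-current $T=\partial\bar S+\bar\partial S$ cannot be supported in $E$ (or in $Y$). When the locus in question has codimension $\ge 2$, the support theorem for positive currents finishes it. The real obstacle is when $E$ (or $Y$) has hypersurface components, where the dimension count is useless -- an arbitrary positive $(1,1)$-current carried by a divisor need not be a combination of integration currents, precisely because $T$ is not assumed $d$-closed -- so one must exploit the equation $T=\partial\bar S+\bar\partial S$ itself. My plan there would be to show that such a $T$, being concentrated on a divisor, is forced to be $\partial\bar\partial$-closed and hence $d$-closed, then to invoke Skoda--El Mir to write $T=\sum_j c_j[D_j]$ over the hypersurface components, and finally to rule out a nonzero such combination by using the geometry of the modification along $E$ -- for instance by first reducing to blow-ups with smooth centers, where $E=\mathbb P(N_{Z/X})$ fibers over the center $Z$, and then running a fiberwise argument together with the negativity of the normal bundle $N_{Z/X}$. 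This local-to-global analysis of boundary-component currents concentrated on the exceptional divisor is where I expect the essential difficulty to lie.
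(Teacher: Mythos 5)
A preliminary remark: the paper under review does not prove this statement at all --- it is quoted from Alessandrini--Bassanelli \cite{AB2,AB3} --- so your proposal can only be measured against their arguments, whose outer shell (Michelsohn's criterion, transporting an obstructing current through $f$, reducing to currents supported on the center $Y$ or the exceptional set $E$) you have correctly reproduced. The difficulty is that the steps you leave open are precisely the content of their theorem, and one of the tools you rely on is false as stated. There is no ``support theorem'' for merely positive currents in codimension $\ge 2$: on $\mathbb C^2$ the current $\delta_0\, i\,dz_1\wedge d\bar z_1$ is a nonzero positive $(1,1)$-current supported at a point. What rescues the argument is an observation you never make: any component of a boundary $T=\partial\bar S+\bar\partial S$ satisfies $i\partial_{}\bar\partial_{} T=0$ automatically, and the results one must invoke are the Alessandrini--Bassanelli support and structure theorems for \emph{positive $\partial\bar\partial$-closed} currents (vanishing when the support has dimension $\le n-2$; of the form $\sum_j c_j[D_j]$, $c_j\ge 0$ constant, when the support is a divisor). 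You instead propose to \emph{derive} $\partial\bar\partial$-closedness from the support being a divisor, conclude ``hence $d$-closed'' (a non sequitur at that stage), and then quote Skoda--El Mir and Siu, which concern closed positive currents; so even where your conclusions are correct, the tools named do not deliver them.

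The two reductions are also incomplete exactly where the real work lies. In the pullback direction you must extend not just $T|_{X\setminus Y}$ but the equation $T=\partial\bar S+\bar\partial S$ across the exceptional locus: local finite mass of the transported positive current near $E$ requires the Alessandrini--Bassanelli extension (Skoda--El Mir for $\partial\bar\partial$-closed positive currents), while $S$ carries no positivity at all, so ``take the trivial extension $\widetilde S$'' is not even well defined, and ``correcting $\widetilde S$ by a current carried by $E$'' is precisely the unproved descent statement --- in effect a comparison of the Aeppli-type cohomology of currents of $X$ and $\bar X$ under the modification. Finally, the case you yourself identify as the crux --- excluding $0\ne\sum_j c_j[E_j]=\partial\bar S+\bar\partial S$ on $\bar X$ when $X$ is balanced --- is not addressed: reducing an arbitrary proper modification to blow-ups along smooth centers needs factorization machinery you do not have (and would require knowing the intermediate stages are balanced, which is circular), and even for a single smooth blow-up no argument is given, only the hope that negativity of the normal bundle suffices; in \cite{AB3} this exclusion is a genuine cohomological/positivity argument. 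So the proposal is a reasonable outline of the known strategy, but its three essential ingredients --- the pluriharmonic support/structure theorems, the extension of the boundary equation across $E$, and the exclusion of exceptional integration currents --- are missing or misattributed, and as it stands it does not constitute a proof.
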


Another important  question on  balanced metrics involves the
deformation invariance. In the following, we will assume that
$\{X_t|t\in \triangle(\epsilon)\}$ is an analytic family of compact
complex manifolds. Here $\triangle(\epsilon)=\{t\in\mathbb
C||t|<\epsilon\}$. In this note, we only consider the small
deformation, that is, we may as well assume, if necessarily,
$\epsilon$ will be small enough. It is well-known \cite{MK} that any
small deformation of a compact K\"ahler manifold is K\"ahler, that
is, if $X_0$ is K\"ahler, then $X_t$ is also K\"ahler for small
enough $t$. On the other hand, the existence of a balanced metric is
not preserved under small deformations. Alessandrini and Bassanelli
observed in \cite{AB1} that the small deformation of the Iwasawa
manifold constructed by Nakamura gives such a counter-example.
However, the second author's former student C.-C. Wu in her thesis
proved that in case the complex manifold satisfies the
$\partial\bar\partial$-lemma, the balanced condition is preserved
under small deformations. Actually she proved
\begin{theo}\textup{\cite{Wu}}
If $X_0$ satisfies the $\partial\bar\partial$-lemma, then $X_t$ also
satisfies the $\partial\bar\partial$-lemma for small enough $t$.
\end{theo}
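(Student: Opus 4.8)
The plan is to reduce the statement to the classical description of the $\partial\bar\partial$-lemma in terms of the Hodge--de Rham (Fr\"olicher) spectral sequence, and then to run a semicontinuity argument together with an openness argument. Recall the Deligne--Griffiths--Morgan--Sullivan principle: a compact complex manifold $X$ satisfies the $\partial\bar\partial$-lemma if and only if its Fr\"olicher spectral sequence degenerates at $E_1$ \emph{and} the filtration $F^{\bullet}$ it induces on each $H^k(X,\mathbb C)$ is a pure Hodge structure of weight $k$, i.e.
\[
F^pH^k(X,\mathbb C)\ \oplus\ \overline{F^{k-p+1}H^k(X,\mathbb C)}\ =\ H^k(X,\mathbb C)\qquad\text{for all }p,k.
\]
So it suffices to show both conditions propagate from $X_0$ to $X_t$ for small $t$.

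First I would establish $E_1$-degeneration and, simultaneously, constancy of the Hodge numbers $h^{p,q}(X_t):=\dim H^q(X_t,\Omega^p_{X_t})$. The Betti numbers $b_k(X_t)$ are constant since the family is smoothly locally trivial; Kodaira's upper semicontinuity gives $h^{p,q}(X_t)\le h^{p,q}(X_0)$ for $t$ in a single neighbourhood of $0$ (finitely many pairs $(p,q)$). Combining this with the Fr\"olicher inequality $b_k(X_t)\le\sum_{p+q=k}h^{p,q}(X_t)$ and with the hypothesis on $X_0$ (which forces $E_1$-degeneration for $X_0$, hence $\sum_{p+q=k}h^{p,q}(X_0)=b_k(X_0)$) yields
\[
\sum_{p+q=k}h^{p,q}(X_t)\ \le\ \sum_{p+q=k}h^{p,q}(X_0)\ =\ b_k(X_0)\ =\ b_k(X_t)\ \le\ \sum_{p+q=k}h^{p,q}(X_t),
\]
so all inequalities are equalities. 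Hence the Fr\"olicher spectral sequence of $X_t$ degenerates at $E_1$, and $h^{p,q}(X_t)=h^{p,q}(X_0)$ for all $p,q$; in particular Hodge symmetry $h^{p,q}(X_t)=h^{q,p}(X_t)$ is inherited from $X_0$.

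Next I would handle the Hodge filtration. Since the $h^{p,q}(X_t)$ are constant and the Fr\"olicher spectral sequence of $X_t$ degenerates at $E_1$, the hypercohomology groups $\mathbb H^k(X_t,\Omega^{\ge p}_{X_t})$ have constant dimension $\sum_{j\ge p}h^{j,k-j}(X_0)$ and inject into $H^k(X_t,\mathbb C)$; thus $F^pH^k(X_t)$, the image of $\mathbb H^k(X_t,\Omega^{\ge p}_{X_t})\to H^k(X_t,\mathbb C)$, is a subbundle of the flat (hence trivialized) bundle $t\mapsto H^k(X_t,\mathbb C)$, varying holomorphically in $t$. A dimension count using Hodge symmetry and $E_1$-degeneration gives $\dim F^pH^k(X_t)+\dim\overline{F^{k-p+1}H^k(X_t)}=b_k$, so the purity condition above is equivalent to the transversality condition $F^pH^k(X_t)\cap\overline{F^{k-p+1}H^k(X_t)}=0$ for all $p,k$. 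Transversality of two continuously varying subspaces of complementary dimension is an open condition on $t$, and it holds at $t=0$ because $X_0$ satisfies the $\partial\bar\partial$-lemma; therefore it holds for all small $t$, and by the DGMS characterization $X_t$ satisfies the $\partial\bar\partial$-lemma.

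The main obstacle is the assertion that $F^pH^k(X_t)$ varies continuously (indeed holomorphically) with $t$, rather than jumping. This is exactly where the constancy of the Hodge numbers is used: it forces $\dim\mathbb H^k(X_t,\Omega^{\ge p}_{X_t})$ to be locally constant, which upgrades Grauert/Kodaira base change to local freeness and makes $F^{\bullet}$ a filtration by honest holomorphic subbundles of the Gauss--Manin bundle. Granting that, the remaining ingredients --- upper semicontinuity, the Fr\"olicher inequality, the DGMS characterization, and the elementary openness of transversality --- are routine.
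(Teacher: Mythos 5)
The paper does not prove this statement at all: Theorem 3 is quoted from Wu's thesis \cite{Wu} and used as a black box, so there is no in-paper argument to compare yours against; I can only assess your proposal on its own terms, and it is correct as a sketch, following one of the standard routes. The DGMS characterization you invoke ($\partial\bar\partial$-lemma $\Leftrightarrow$ $E_1$-degeneration of the Fr\"olicher spectral sequence plus the filtration $F^{\bullet}$ inducing a weight-$k$ Hodge structure on each $H^k$) is indeed in \cite{DGMS}, which is also the reference this paper points to for equivalent formulations. Your first step (Ehresmann constancy of $b_k$, Kodaira upper semicontinuity of $h^{p,q}$, the Fr\"olicher inequality, and $E_1$-degeneration on $X_0$ squeezing everything into equalities) is airtight and yields both degeneration on $X_t$ and constancy, hence symmetry, of the Hodge numbers. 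The second step is the only place requiring real technology, and you identify it correctly: constancy of $\dim\mathbb H^k(X_t,\Omega^{\ge p}_{X_t})$ upgrades base change to local freeness, so the images $F^pH^k(X_t)$ form holomorphic subbundles of the Gauss--Manin bundle $R^k\pi__\ast\mathbb C\otimes\mathcal O$; one small point worth making explicit is that the complex conjugation you apply to $F^{k-p+1}H^k(X_t)$ is taken with respect to the real structure of the local system, which is flat, so the conjugate family of subspaces also varies continuously in the fixed trivialization --- with that, the dimension count via Hodge symmetry reduces opposedness to the open condition $F^p\cap\overline{F^{k-p+1}}=0$, true at $t=0$, and the conclusion follows from DGMS. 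So the proposal is sound modulo standard results (coherent base change for the hypercohomology of the truncated relative de Rham complex, semicontinuity, DGMS); whether it coincides with Wu's original argument cannot be checked from this paper, which only cites the thesis.
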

\begin{theo}\label{Wu2} \textup{\cite{Wu}} If $X_0$ satisfies the
$\partial\bar\partial$-lemma and admits a balanced metric, then
$X_t$ also admits a balanced metric for sufficiently small $t$.
\end{theo}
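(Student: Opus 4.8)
The plan is to construct, for all sufficiently small $t$, a strictly positive real $(n-1,n-1)$-form $\Omega_t$ on $X_t$ that is $d$-closed; this suffices, because the classical linear-algebra correspondence of Michelsohn \cite{Mic} (the map $\omega\mapsto\omega^{n-1}$ is a bijection from positive $(1,1)$-forms to positive $(n-1,n-1)$-forms) then yields a hermitian metric $\omega_t$ with $\omega_t^{n-1}=\Omega_t$, and $d\Omega_t=0$ is precisely the balanced condition $d\omega_t^{n-1}=0$. Alternatively one may invoke the characterization theorem of \cite{Mic} stated above: if $T=\partial\bar S+\bar\partial S$ is a positive $(1,1)$-current on $X_t$, then $\int_{X_t}T\wedge\Omega_t=0$ by Stokes since $d\Omega_t=0$, which forces $T=0$.

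To produce $\Omega_t$ I would begin with the fixed real $(2n-2)$-form $\omega_0^{n-1}$, which is $d$-closed on the underlying smooth manifold independently of the complex structure, and decompose it into $J_t$-bidegrees, $\omega_0^{n-1}=\tau_t+\sigma_t+\bar\tau_t$, with $\tau_t$ of type $(n,n-2)$, $\sigma_t$ real of type $(n-1,n-1)$, and $\bar\tau_t$ of type $(n-2,n)$. Since $J_t$ depends smoothly on $t$ and $\omega_0^{n-1}$ is of pure type $(n-1,n-1)$ with respect to $J_0$, these pieces depend smoothly on $t$, with $\tau_0=0$ and $\sigma_0=\omega_0^{n-1}$, so $\sigma_t$ is strictly positive for $t$ small. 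Separating $0=d\omega_0^{n-1}=(\partial_t+\bar\partial_t)(\tau_t+\sigma_t+\bar\tau_t)$ by bidegree gives $\partial_t\sigma_t=-\bar\partial_t\tau_t$ together with its conjugate; thus the positive form $\sigma_t$ fails to be closed exactly by the term $\bar\partial_t\tau_t$, which must be corrected away.

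Here I would use the $\partial\bar\partial$-lemma on $X_t$, which holds for small $t$ by \cite{Wu}. The $(n,n-1)$-form $\psi_t:=\bar\partial_t\tau_t$ is $\bar\partial_t$-closed trivially and $\partial_t$-closed because its bidegree $(n+1,n-1)$ vanishes; being $d$-closed and manifestly $\bar\partial_t$-exact, it equals $\partial_t\bar\partial_t\eta_t$ for some $(n-1,n-2)$-form $\eta_t$. Setting $\Omega_t:=\sigma_t+\bar\partial_t\eta_t+\overline{\bar\partial_t\eta_t}$, one checks that $\Omega_t$ is real of type $(n-1,n-1)$ and $\partial_t\Omega_t=\partial_t\sigma_t+\partial_t\bar\partial_t\eta_t=-\bar\partial_t\tau_t+\psi_t=0$, hence (being real) $\bar\partial_t\Omega_t=0$ as well, so $d\Omega_t=0$.

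The hard part will be the last step: ensuring that $\Omega_t$ is still strictly positive, i.e.\ that the correction $\bar\partial_t\eta_t+\overline{\bar\partial_t\eta_t}$ tends to $0$ as $t\to0$. This forces one to choose $\eta_t$ by a canonical, continuously varying solution operator for $\partial_t\bar\partial_t\eta_t=\psi_t$, built from Hodge theory for a smooth family of hermitian metrics $g_t$, and to bound its norm uniformly for $t$ near $0$. The needed uniformity rests on the Hodge numbers $h^{p,q}(X_t)$ being locally constant: because $X_t$ satisfies the $\partial\bar\partial$-lemma for small $t$ one has $b_k(X_t)=\sum_{p+q=k}h^{p,q}(X_t)$, the $b_k$ are topological invariants hence constant, and each $h^{p,q}$ is upper semicontinuous, so all are constant; standard Kodaira--Spencer theory then makes the relevant Green operators vary smoothly, giving the uniform bound. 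Since $\psi_t=\bar\partial_t\tau_t$ is continuous in $t$ with $\psi_0=0$, the resulting $\eta_t$ satisfies $\eta_t\to0$, so $\Omega_t=\sigma_t+o(1)$ remains strictly positive for $t$ small and the construction is complete. Everything apart from this uniform solvability of the $\partial\bar\partial$-equation is routine bookkeeping with bidegrees.
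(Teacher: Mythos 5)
Your construction of the closed form is, up to conjugation, exactly the paper's: you decompose the fixed closed positive $(n-1,n-1)$-form by $J_t$-bidegree (the paper does this via the pullback $\Omega_t=\phi_t^\ast\omega^{n-1}$), use the $\partial\bar\partial$-lemma on $X_t$ (which you rightly get for small $t\neq 0$ from Wu's openness theorem, Theorem 3 above) to write the obstruction $\bar\partial_t\tau_t$ as $\partial_t\bar\partial_t\eta_t$, add the correction $\bar\partial_t\eta_t+\overline{\bar\partial_t\eta_t}$ to the $(n-1,n-1)$-part, and finish with Michelsohn's $(n-1)$-th root; this matches the paper's $\tilde\Omega_t=\Omega_t^{n-1,n-1}+i\partial_t\Psi_t-i\bar\partial_t\bar\Psi_t$. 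Where you genuinely diverge is the positivity step, which is indeed the heart of the matter. The paper normalizes $\Psi_t\perp_{\omega_t}\ker(i\partial_t\bar\partial_t)$, introduces the fourth-order Kodaira--Spencer operator $E_t$ of \cite{KS} on $(n-1,n)$-forms, solves $E_t\gamma_t=\bar\partial_t\Omega_t^{n-1,n-1}$ with $\gamma_t\perp\ker E_t$, proves the identities $\partial_t\gamma_t=0$ and $i\Psi_t=\bar\partial_t^\ast\partial_t^\ast\gamma_t$, and then gets $\|i\partial_t\Psi_t\|_{C^0}\leq C\|\gamma_t\|_{C^3}\leq C\|\partial_t\Omega_t^{n-2,n}\|_{C^{0,\alpha}}\to 0$ by Schauder estimates for the smoothly varying family $E_t$ --- with no appeal to constancy of any cohomology. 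You instead take the canonical Green-operator solution of $\partial_t\bar\partial_t\eta_t=\psi_t$ and argue its continuity in $t$ from local constancy of the Hodge numbers. This can be made to work, but note the detail you elide: the operator whose Green operator you need is a fourth-order Aeppli/Bott--Chern-type Laplacian (or $E_t$ itself), and Kodaira--Spencer continuity of its Green operator requires the dimension of \emph{its} kernel to be locally constant; constancy of $h^{p,q}_{\bar\partial}(X_t)$ gives this only after you invoke the $\partial\bar\partial$-lemma again to identify Bott--Chern/Aeppli cohomology with Dolbeault cohomology. So your argument is legitimate for the present statement but is tied to the full $\partial\bar\partial$-lemma, whereas the paper's $E_t$-based estimate is precisely what allows the same construction to go through under the much weaker $(n-1,n)$-th weak $\partial\bar\partial$-lemma of its main theorem. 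In short: same construction, different (and less portable, though for this theorem adequate once the kernel-dimension point is supplied) mechanism for the uniform smallness of the correction.
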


A complex manifold satisfies the $\partial\bar\partial$-lemma if for
its every differential form $\alpha$  such that $\partial\alpha=0$
and $\bar\partial\alpha=0$, and such that $\alpha=d\gamma$ for some
differential form $\gamma$, there is some differential form $\beta$
such that $\alpha=i\partial\bar\partial\beta$.  There are several
equivalent versions of the $\partial\bar\partial$-lemma, see
\cite{DGMS}. In this note, we consider the question of how to weaken
the $\partial\bar\partial$-lemma condition such that the existence
of balanced metrics is still preserved under small deformations.
First we give the following
\begin{defi}
A complex $n$-dimensional manifold satisfies the \textup{
$(n-1,n)$-th weak $\partial\bar\partial$-lemma} if for its every
real $(n-1,n-1)$-form $\varphi$ such that $\bar\partial\varphi$ is a
$\partial$-exact form,  there exists an $(n-2,n-1)$-form $\psi$ such
that
\begin{equation}
\bar\partial\varphi=i\partial\bar\partial\psi.
\end{equation}
\end{defi}

Certainly the condition $(n-1,n)$-th weak
$\partial\bar\partial$-lemma is weaker than the
$\partial\bar\partial$-lemma. Our main result in this note is
\begin{theo}\label{main thm} Let $\{X_t|t\in \triangle(\epsilon)\}$ be an analytic
family of compact complex $n$-dimensional manifolds. If $X_0$ admits
a balanced metric and for small $t\not=0$, $X_t$ satisfies the
$(n-1,n)$-th weak $\partial\bar\partial$-lemma, then for
sufficiently small $t$, $X_t$ also admits a balanced metric.
\end{theo}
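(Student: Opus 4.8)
The plan is to produce, for small $t$, a $d$-closed real $(n-1,n-1)$-form on $X_t$ that is pointwise an $(n-1)$st power and then take its $(n-1)$st root. Recall that $\omega\mapsto\omega^{n-1}$ is a diffeomorphism from the positive $(1,1)$-forms onto an open subset of the real $(n-1,n-1)$-forms (its differential $\mu\mapsto(n-1)\omega^{n-2}\wedge\mu$ is an isomorphism at each point by hard Lefschetz), so a balanced metric on a complex manifold is exactly a $d$-closed real $(n-1,n-1)$-form lying in that open cone at every point. Fix a balanced metric $\omega_0$ on $X_0$, put $\Omega_0:=\omega_0^{\,n-1}$ (a $d$-closed real form of pure type $(n-1,n-1)$ for $J_0$), and regard each $X_t$ as $(X,J_t)$ for a fixed compact smooth manifold $X$ with $J_t\to J_0$. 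For $|t|$ small, decompose $\Omega_0$ into $J_t$-types,
\[
\Omega_0=\Omega_0^{\,n,n-2}+\Phi_t+\Omega_0^{\,n-2,n},\qquad \Omega_0^{\,n,n-2}=\overline{\Omega_0^{\,n-2,n}},
\]
with $\Phi_t$ the real $(n-1,n-1)$-part. Since $\Omega_0^{\,n-2,n}\to0$ as $t\to0$, we have $\Phi_t\to\Omega_0$ in $C^\infty$, so $\Phi_t$ lies in the open cone of $(n-1)$st powers for $t$ small.

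Comparing the $(n-1,n)$-components of $0=d\Omega_0=(\partial_t+\bar\partial_t)\Omega_0$ gives $\bar\partial_t\Phi_t=-\partial_t\Omega_0^{\,n-2,n}$; thus $\bar\partial_t\Phi_t$ is a $\partial_t$-exact form, with a $\partial_t$-primitive tending to $0$. Applying the $(n-1,n)$-th weak $\partial\bar\partial$-lemma on $X_t$ (valid for small $t\neq0$) to the real $(n-1,n-1)$-form $\Phi_t$ yields an $(n-2,n-1)$-form $\psi_t$ with $\bar\partial_t\Phi_t=i\partial_t\bar\partial_t\psi_t$. Set
\[
\Omega_t:=\Phi_t+i\partial_t\psi_t-i\bar\partial_t\overline{\psi_t}=\Phi_t+2\,\mathrm{Re}\bigl(i\partial_t\psi_t\bigr),
\]
a real $(n-1,n-1)$-form for $J_t$. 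Using $\bar\partial_t\Phi_t=i\partial_t\bar\partial_t\psi_t$, its conjugate $\partial_t\Phi_t=i\partial_t\bar\partial_t\overline{\psi_t}$, and $\partial_t^2=\bar\partial_t^2=0$, $\partial_t\bar\partial_t=-\bar\partial_t\partial_t$, a short computation gives $\partial_t\Omega_t=0=\bar\partial_t\Omega_t$, hence $d\Omega_t=0$.

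Consequently, if $\psi_t$ can be chosen so that the correction $2\,\mathrm{Re}(i\partial_t\psi_t)\to0$ (say in $C^0$) as $t\to0$, then $\Omega_t\to\Omega_0$, so $\Omega_t=\omega_t^{\,n-1}$ for a smooth positive $(1,1)$-form $\omega_t$, and $d\omega_t^{\,n-1}=d\Omega_t=0$ exhibits $\omega_t$ as a balanced metric on $X_t$; together with $\omega_0$ this proves the theorem. The main obstacle is precisely this estimate, since the weak lemma only asserts the \emph{existence} of $\psi_t$. To obtain decay I would fix a smooth family of Hermitian metrics on the $X_t$ and use Hodge theory for the fourth-order elliptic operators computing Bott--Chern (equivalently Aeppli) cohomology: $\bar\partial_t\Phi_t$ is both $\partial_t$- and $\bar\partial_t$-exact, hence by the weak lemma lies in $\mathrm{Im}(\partial_t\bar\partial_t)$, so a Green-operator formula furnishes a canonical $\psi_t$ with $\|i\partial_t\psi_t\|\le C_t\|\bar\partial_t\Phi_t\|$ and $\|\bar\partial_t\Phi_t\|=O(|t|)\to0$. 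The delicate point, which I expect to be where the real work lies, is the uniform boundedness of $C_t$ as $t\to0$: because the weak lemma is \emph{not} assumed at $t=0$, the relevant spectral gap may collapse there, so one must exploit the special structure of the obstruction (it is $\partial_t$-exact with small primitive and lies in $\mathrm{Im}(\partial_t\bar\partial_t)$) to control the low-lying spectrum, or else argue by contradiction via weak compactness. An alternative, more global route uses Michelsohn's characterization directly: if $X_{t_k}$ were not balanced for $t_k\to0$, take nonzero positive $(1,1)$-currents $T_k$ on $X_{t_k}$ that are components of boundaries, normalize to unit mass, extract a nonzero weak limit $T$ on $X_0$, and show $T$ is a component of a boundary on $X_0$ (again the essential difficulty is an a priori bound, now on potentials of the $T_k$, which is where the weak lemma must enter), contradicting that $X_0$ is balanced.
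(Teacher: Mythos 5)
Your construction of the candidate form coincides with the paper's: keep the $(n-1,n-1)$-component $\Omega_t^{n-1,n-1}$ of the pulled-back (or type-decomposed) form $\omega_0^{n-1}$, invoke the weak lemma to get $\Psi_t$ with $i\partial_t\bar\partial_t\Psi_t=\bar\partial_t\Omega_t^{n-1,n-1}$, correct by $i\partial_t\Psi_t-i\bar\partial_t\bar\Psi_t$, check $d$-closedness, and reduce everything to the positivity of the corrected form, i.e.\ to the smallness of $i\partial_t\Psi_t$ in $C^0$ as $t\to 0$. But that smallness is the entire analytic content of the theorem, and your proposal leaves it unproved: you only list possible strategies (a uniform bound on the Green operator of the Aeppli/Bott--Chern type Laplacian, which you yourself observe may fail since no lemma is assumed at $t=0$ and the spectral gap could collapse; or a contradiction argument with Michelsohn's positive currents, which needs exactly the same a priori bound on the potentials $\Psi_t$). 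So as written this is a correct reduction together with an honest statement of the remaining difficulty, not a proof.

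The paper closes this gap with a specific device (following Fu--Li--Yau and Kodaira--Spencer) that your sketch does not contain. Choose $\Psi_t\perp_{\omega_t}\ker(i\partial_t\bar\partial_t)$ and introduce the fourth-order Kodaira--Spencer operator $E_t=\partial_t\bar\partial_t\bar\partial_t^\ast\partial_t^\ast+\partial_t^\ast\bar\partial_t\bar\partial_t^\ast\partial_t+\partial_t^\ast\partial_t$ acting on $(n-1,n)$-forms, and solve $E_t\gamma_t=-\partial_t\Omega_t^{n-2,n}$ with $\gamma_t\perp\ker E_t$. The weak lemma enters a second time here: since $\alpha\in\ker E_t$ exactly when $\partial_t\alpha=0$ and $\bar\partial_t^\ast\partial_t^\ast\alpha=0$, the existence of $\Psi_t$ gives $(-\partial_t\Omega_t^{n-2,n},\alpha)=(i\partial_t\bar\partial_t\Psi_t,\alpha)=(i\Psi_t,\bar\partial_t^\ast\partial_t^\ast\alpha)=0$, so the equation is solvable. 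One then shows $\partial_t\gamma_t=0$ and, using the normalization of $\Psi_t$, the identity $i\Psi_t=\bar\partial_t^\ast\partial_t^\ast\gamma_t$, whence $\|i\partial_t\Psi_t\|_{C^0(\omega_t)}\le C\|\gamma_t\|_{C^3(\omega_t)}\le C\|\partial_t\Omega_t^{n-2,n}\|_{C^{0,\alpha}(\omega_t)}$, with the constant taken uniform because the metrics $\omega_t$ and the operators $E_t$ vary smoothly with $t$, and with the right-hand side tending to $0$ because $\Omega_t^{n-2,n}\to 0$. In other words, the estimate is obtained not from the Green operator of the operator whose image is $\mathrm{Im}(\partial_t\bar\partial_t)$, but by re-expressing the canonical $\Psi_t$ through the smoothly varying elliptic family $E_t$ applied to a datum that is itself small. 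This is precisely the step your plan flags as ``where the real work lies''; without it, or a substitute for it, the proposal does not yield the theorem, and your current-theoretic alternative does not sidestep it since it requires the same $C^0$ control of $i\partial_t\Psi_t$.
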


So when $X_t$ for $t\not=0$ satisfies the
$\partial\bar\partial$-lemma, then the existence of balanced metrics
is preserved under small deformations. This result at  present is
mildly stronger than Theorem \ref{Wu2} since we don't know whether
the $\partial\bar\partial$-lemma property is also closed under small
deformations.

We should check that the small deformation of the Iwasawa manifold
 mentioned above does not satisfy  the condition in Theorem \ref{main thm}.
Actually if we use the notations in \cite{AB1}, we find that on
$X_t$ when $t\not=0$,
\begin{equation*}
\bar\partial_t\bigl(\phi_{1,t}\wedge
\bar\phi_{1,t}\wedge\phi_{3,t}\wedge
\bar\phi_{3,t}\bigr)=t\partial_t\bigl(\phi_{3,t}\wedge
\bar\phi_{1,t}\wedge \bar\phi_{2,t}\wedge\bar\phi_{3,t}\bigr)
\end{equation*}
can not be written as a $\partial_t\bar\partial_t$-exact form. Here
$d$ is decomposed as $d=\partial_t+\bar\partial_t$ according to the
complex structure of $X_t$.
\\

  An application of the main
result is
\begin{coro}Let $\{X_t|t\in \triangle(\epsilon)\}$ be an analytic
family of compact complex $n$-dimensional manifolds.  Suppose $X_0$
admits a balanced metric and the Dolbeault cohomology group
$H^{2,0}(X_t,\mathbb C)=0$ for small $t\not =0$. Then there exists a
balanced metric on $X_t$ for sufficiently small $t$.
\end{coro}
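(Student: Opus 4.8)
The plan is to deduce the corollary directly from Theorem \ref{main thm}. Since $X_0$ already carries a balanced metric, it suffices to verify that, for small $t\not=0$, the manifold $X_t$ satisfies the $(n-1,n)$-th weak $\partial\bar\partial$-lemma; the hypothesis $H^{2,0}(X_t,\mathbb C)=0$ will be exactly what makes this verification go through. So I would fix such a $t$ and let $\varphi$ be a real $(n-1,n-1)$-form on $X_t$ with $\bar\partial_t\varphi$ a $\partial_t$-exact form, say $\bar\partial_t\varphi=\partial_t\eta$ for some $(n-2,n)$-form $\eta$.

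The key observation is that $\eta$ has maximal anti-holomorphic degree, so $\bar\partial_t\eta=0$ automatically, and hence $\eta$ represents a class in the Dolbeault cohomology group $H^{n-2,n}_{\bar\partial}(X_t,\mathbb C)$. By Serre duality on the compact complex $n$-fold $X_t$, this group is isomorphic to $H^{2,0}_{\bar\partial}(X_t,\mathbb C)^{\ast}$, which vanishes by assumption. Therefore $\eta=\bar\partial_t\zeta$ for some $(n-2,n-1)$-form $\zeta$, and consequently
\[
\bar\partial_t\varphi=\partial_t\eta=\partial_t\bar\partial_t\zeta=i\,\partial_t\bar\partial_t(-i\zeta).
\]
Taking $\psi=-i\zeta$, which is again of type $(n-2,n-1)$, shows that $X_t$ satisfies the $(n-1,n)$-th weak $\partial\bar\partial$-lemma. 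Theorem \ref{main thm} then yields a balanced metric on $X_t$ for sufficiently small $t$.

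I expect the argument to be essentially formal once the correct cohomological reformulation is in place; the only points needing care are to recognize that the obstruction to solving $\bar\partial_t\varphi=i\partial_t\bar\partial_t\psi$ lives in $H^{n-2,n}_{\bar\partial}(X_t)$ — using that every $(n-2,n)$-form is automatically $\bar\partial_t$-closed — and to identify this space via Serre duality with the dual of $H^{2,0}_{\bar\partial}(X_t)$. No quantitative or openness-in-$t$ estimate is required beyond what is already contained in Theorem \ref{main thm}, since the hypothesis is imposed directly on each $X_t$ with $t\not=0$.
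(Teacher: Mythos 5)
Your proposal is correct and follows essentially the same route as the paper: note that the $(n-2,n)$-form $\eta$ is automatically $\bar\partial_t$-closed, use Serre duality to get $H^{n-2,n}(X_t,\mathbb C)\cong H^{2,0}(X_t,\mathbb C)^{\ast}=0$ so that $\eta$ is $\bar\partial_t$-exact, and then absorb the factor of $i$ into $\psi$ to verify the $(n-1,n)$-th weak $\partial\bar\partial$-lemma before invoking Theorem \ref{main thm}. No gaps; the only difference from the paper is the cosmetic choice $\psi=-i\zeta$ versus writing $\eta_t=i\bar\partial_t\psi_t$.
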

\begin{proof}
By  Serre duality, we have  $H^{n-2,n}(X_t,\mathbb
C)=H^{2,0}(X_t,\mathbb C)=0$. Now let $\varphi_t$ be a real
$(n-1,n-1)$-form on $X_t$ such that $\bar\partial_t\varphi_t$ is a
$\partial_t$-exact form, i.e., there exist an $(n-2,n)$-form
$\eta_t$ such that $\bar\partial_t\varphi_t=\partial_t\eta_t$. Since
$\bar\partial_t\eta_t=0$ and $H^{n-2,n}(X_t,\mathbb C)=0$, there
exists an $(n-2,n-1)$-form $\psi_t$ such that
$\eta_t=i\bar\partial_t\psi_t$. Therefore
$\bar\partial_t\varphi_t=i\partial_t\bar\partial_t\psi_t$. So $X_t$
satisfies the $(n-1,n)$-th weak $\partial\bar\partial$-lemma.
\end{proof}

\begin{coro}
Let $\{X_t|t\in \triangle(\epsilon)\}$ be an analytic family of
compact complex $n$-dimensional manifolds. If $X_0$ admits a
balanced metric and $H^{2,0}(X,\mathbb C)=0$, then $X_t$ also admits
a balanced metric for small enough $t$.
\end{coro}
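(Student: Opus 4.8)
The plan is to reduce the statement to the preceding Corollary. That Corollary already yields the conclusion as soon as one knows $H^{2,0}(X_t,\mathbb C)=0$ for \emph{all small $t\neq 0$}, whereas the present hypothesis (reading $X$ as the central fibre $X_0$) only supplies this vanishing at $t=0$. So the single point to establish is that the vanishing of $H^{2,0}$ propagates from $X_0$ to the nearby fibres.

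The tool for this is the upper semicontinuity of Dolbeault cohomology dimensions in an analytic family. For fixed bidegree $(p,q)$ the function
\[
t\longmapsto h^{p,q}(X_t):=\dim_{\mathbb C}H^{p,q}(X_t,\mathbb C)=\dim_{\mathbb C}H^q(X_t,\Omega^p_{X_t})
\]
is upper semicontinuous on $\triangle(\epsilon)$; this is a classical consequence of Hodge theory for the smoothly varying family of $\bar\partial$-Laplacians $\Box_t$ together with the standard elliptic estimates (see Kodaira--Morrow \cite{MK}). First I would apply this with $(p,q)=(2,0)$: since $h^{2,0}(X_0)=0$ and $h^{2,0}$ is a non-negative integer-valued upper semicontinuous function of $t$, it must vanish identically on some possibly smaller disc $\triangle(\epsilon')\subset\triangle(\epsilon)$ about the origin. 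In particular $H^{2,0}(X_t,\mathbb C)=0$ for all small $t\neq 0$.

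With that in hand the previous Corollary applies verbatim: $X_0$ carries a balanced metric and $H^{2,0}(X_t,\mathbb C)=0$ for small $t\neq 0$, hence $X_t$ admits a balanced metric for sufficiently small $t$. There is essentially no serious obstacle here; the only thing to be careful about is to invoke semicontinuity for $\bar\partial$-cohomology specifically (rather than for de Rham cohomology), and once the correct reference is cited the argument is immediate.
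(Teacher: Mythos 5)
Your proposal is correct and coincides with the paper's own argument: upper semicontinuity of $h^{2,0}(t)=\dim H^{2,0}(X_t,\mathbb C)$ forces $H^{2,0}(X_t,\mathbb C)=0$ for all small $t$, after which the preceding corollary applies directly. No further comment is needed.
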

\begin{proof}
Since the function $h^{p,q}(t)=\dim H^{p,q}(X_t,\mathbb C)$ is upper
semicontinuous in $t$, $h^{2,0}(t)\leq h^{2,0}(0)=0$ for small $t$. So
$H^{2,0}(X_t,\mathbb C)=0$. Then the corollary follows.
\end{proof}

It is well-known that the twistor space $Z$ associated to a compact
self-dual four manifold is a complex manifold and the natural metric
on it is a balanced metric (c.f. \cite{Mic,Gau}). Moreover, M.
Eastwood and M. Singer in \cite{ES} observed  $H^{2,0}(Z,\mathbb
C)=0$. So above corollary implies
\begin{coro}
Let $Z$ be the twistor space associated to a compact self-dual  four
manifold. Then any small deformation of $Z$  admits a balanced
metric.
\end{coro}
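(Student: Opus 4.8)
The plan is to read this corollary off the previous one, whose hypotheses are that $X_0$ admits a balanced metric and that $H^{2,0}(X_0,\mathbb C)=0$. So the entire proof amounts to verifying these two conditions for $X_0=Z$ and then invoking that corollary with $X_0=Z$.

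For the first condition I would recall the twistor construction: the twistor space $Z$ over a compact self-dual four-manifold $M$ is a compact complex threefold (the self-duality of $M$ being exactly the integrability condition for the Atiyah--Hitchin--Singer almost complex structure on the $\mathbb{CP}^1$-bundle $Z\to M$), and it carries a canonical Hermitian metric $\omega$ assembled from the round metric on the fibers and the metric on $M$. As already recalled in the excerpt and computed in \cite{Mic,Gau}, this metric is balanced, i.e. $d\omega^{2}=0$. Hence $Z$ admits a balanced metric, with $n=3$.

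For the second condition I would simply quote the observation of Eastwood and Singer in \cite{ES} that $H^{2,0}(Z,\mathbb C)=0$. Granting these two facts, the preceding corollary applies verbatim: for any analytic family $\{X_t\mid t\in\triangle(\epsilon)\}$ with $X_0=Z$, the manifold $X_t$ admits a balanced metric for all sufficiently small $t$, which is precisely the assertion that every small deformation of $Z$ is balanced.

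I do not expect any genuine obstacle here: both ingredients are imported results from \cite{Mic,Gau} and \cite{ES}, and the deformation mechanism is already packaged inside the previous corollary (via Theorem \ref{main thm} and the Serre-duality argument). The only care needed is organizational — recording that $n=3$, so that the relevant weakened Hodge hypothesis is the $(2,3)$-th weak $\partial\bar\partial$-lemma, and noting the Serre-duality identification $H^{n-2,n}(X_t,\mathbb C)\cong H^{2,0}(X_t,\mathbb C)$ that underlies the passage from $H^{2,0}=0$ to the weak $\partial\bar\partial$-lemma on the nearby fibers.
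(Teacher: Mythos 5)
Your proposal matches the paper's argument exactly: the paper deduces this corollary by citing that the natural metric on the twistor space is balanced (\cite{Mic,Gau}) and Eastwood--Singer's observation that $H^{2,0}(Z,\mathbb C)=0$ (\cite{ES}), then applying the preceding corollary. Your additional bookkeeping ($n=3$, Serre duality, upper semicontinuity) is just the content already packaged in that corollary, so there is nothing to add.
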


Up to now, the global deformation stability of existence of balanced
metrics is unclear (c.f. \cite{Yau,Po}). We shall use above corollary to study this question.\\

\noindent {\bf Acknowledgment:} The idea of the proof of the main
result is from \cite{FLY}. The authors would like to thank Professor
J. Li for useful discussions. Fu is partially supported by NSFC
grants 11025103 and 10831008. Yau is partially supported by NSF
grant DMS-0804454.

\section{The proof of the main theorem}
Let $X_0$ be a compact complex manifold with a balanced metric
$\omega$. Let $\pi:\mathfrak{X}\to \bigtriangleup(\epsilon)$ be a
small deformation of $X_0$. That is, $\pi:\mathfrak{X}\to
\bigtriangleup(\epsilon)$ is a  holomorphic map with maximal rank so
that $\pi$ is proper and each fiber $X_t=\pi^{-1}(t)$ has the
structure of a complex manifold which varies analytically with $t$
(c.f. \cite{MK}). Let $\phi_t:X_t\to X_0$ be a diffeomorphism which
varies smoothly with $t$ and $\phi_0$ is the identity map. Since
$\omega^{n-1}$ is a $d$-closed real $(n-1,n-1)$-form on $X$,
$\Omega_t=\phi_t^\ast\omega^{n-1}$ is a $d$-closed real
$(2n-2)$-form on $X_t$. We decompose $\Omega_t$ as
\begin{equation*}
\Omega_t=\Omega_t^{n-2,n}+\Omega_t^{n-1,n-1}+\Omega_t^{n,n-2}.
\end{equation*}
Then we have the following facts:
\begin{enumerate}
 \item  $\Omega_t^{n-1,n-1}$ is real and approaches $\omega^{n-1}$ as $t\to 0$;\\
\item $\overline{\Omega_t^{n-2,n}}=\Omega_t^{n,n-2}$ and
$\Omega_t^{n-2,n}$ approaches zero as $t\to 0$;\\
\item Since $\Omega_t$ is $d$-closed, we have
\begin{eqnarray*}
\begin{aligned}
\bar\partial_t\Omega_t^{n-1,n-1}+\partial_t\Omega_t^{n-2,n}=0.
\end{aligned}
\end{eqnarray*}
\end{enumerate}

According to (1), $\Omega_t^{n-1,n-1}$ is strictly positive definite
for sufficiently small $t$. Then there exists a hermitian metric
$\omega_t$ on $X_t$  such that $\omega_t^{n-1}=\Omega_t^{n-1,n-1}$.
This observation is due to Michelsohn \cite{Mic}. Clearly
$\omega_0=\omega$ and $\omega_t$ approaches $\omega$ smoothly and
uniformly as $t\to 0$. We use this metric $\omega_t$ as the
background metric on $X_t$.

If  $X_t$ satisfies the $(n-1,n)$-th weak
$\partial\bar\partial$-lemma, then (3) implies that there exists an
$(n-2,n-1)$-form $\Psi_t$ on $X_t$ such that
\begin{equation}\label{ddbe}
i\partial_t\bar\partial_t\Psi_t=\bar\partial_t\Omega_t^{n-1,n-1}=-\partial_t\Omega_t^{n-2,n}.
\end{equation}
We can choose $\Psi_t$ such that
\begin{equation*}
\Psi_t\perp_{\omega_t}\ker(i\partial_t\bar\partial_t).
\end{equation*}
We let
$$\tilde\Omega_t=\Omega_t^{n-1,n-1}+i\partial_t\Psi_t-i\bar\partial\bar\Psi_t.$$
 Then $\tilde\Omega_t$ is a $d$-closed real $(n-1,n-1)$-form. If we can prove $\tilde\Omega_t$ is strictly
 positive definite for sufficiently small $t$, then there exists a hermitian metric $\tilde\omega_t$ such
 that $\tilde\omega_t^{n-1}=\tilde\Omega_t$ and
 $d\tilde\omega_t^{n-1}=0$. That is, we have gotten a balanced
 metric $\tilde\omega_t$ on $X_t$ as desired.\\

So we only need to prove the positivity of $\tilde\Omega_t$. To this
end we introduce the Kodaira-Spencer operator $E_t$ on
$\Lambda^{n-1,n}(X_t)$, i.e., on the set of all $(n-1,n)$-forms on
$X_t$
\begin{equation*}
E_t=\partial_t\bar\partial_t\bar\partial_t^\ast\partial_t^\ast
+\partial_t^\ast\bar\partial_t\bar\partial_t^\ast\partial_t
+\partial_t^\ast\partial_t,
\end{equation*}
where the Hodge-star operator $\ast$, which we have dropped the
subscript $t$, is defined by the metric $\omega_t$. We consider the
equation
\begin{equation}\label{basiceq}
E_t(\gamma_t)=-\partial_t\Omega_t^{n-2,n}(=\bar\partial_t\Omega_t^{n-1,n-1}).
\end{equation}
We first check that $\partial_t\Omega_t^{n-2,n}\perp_{\omega_t} \ker
E_t$. In \cite{KS}, Kodaira and Spencer proved that $E_t$ is
self-adjoin, strongly elliptic of order 4, and a form
$\alpha_t\in\ker E_t$ if and only if \begin{equation*}
\partial_t\alpha_t=0 \ \ \ \ \textup{and}\ \ \ \
\bar\partial^\ast_t\partial^\ast_t\alpha_t=0.
\end{equation*}
Then by (\ref{ddbe}), for any $\alpha_t\in \ker E_t$, we have
\begin{equation*}
(-\partial_t\Omega_t^{n-2,n},\alpha_t)=(i\partial_t\bar\partial_t\Psi_t,
\alpha_t)=(i\Psi_t,\bar\partial^\ast_t\partial^\ast_t\alpha_t)=0.
\end{equation*}
Therefore, by the theory of elliptic operators, there exists a
unique smooth solution $\gamma_t$ of equation (\ref{basiceq}) such
that $\gamma_t\perp_{\omega_t} \ker E_t$.

As in \cite{FLY}, we can  prove
\begin{equation}\label{ccc}
\partial_t\gamma_t=0\ \ \ \text{and}\ \ \
i\Psi_t=\bar\partial_t^\ast\partial_t^\ast\gamma_t.
\end{equation}
For readers'convenience, we prove these two identities here.  From
(\ref{ddbe}) and (\ref{basiceq}), we get
$E_t(\gamma_t)-i\partial_t\bar\partial_t\Psi_t=0$, which, from the
definition of the operator $E_t$, is equivalent to
\begin{equation*}
\partial_t\bar\partial_t(\bar\partial_t^\ast
\partial_t^\ast\gamma_t-i\Psi_t)
+\partial_t^\ast(\bar\partial_t\bar\partial_t^\ast+1)
\partial_t\gamma_t=0.
\end{equation*}
By taking  the $L^2$-norm of the left hand side, we get
\begin{equation}\label{aaa}
\partial_t\bar\partial_t(\bar\partial_t^\ast
\partial_t^\ast\gamma_t-i\Psi_t)
=0\ \ \ \text{and}\ \ \
\partial_t^\ast(\bar\partial_t\bar
\partial_t^\ast+1)\partial_t\gamma_t=0.
\end{equation}
On the other hand, for any $\phi\in\ker
(i\partial_t\bar\partial_t)$, we have
$$(\bar\partial_t^\ast\partial_t^\ast\gamma_t,\phi) =(\gamma_
t,\partial_t\bar\partial_t\phi)=0.$$ Since
$\Psi_t\bot_{\omega_t}\ker(i\partial_t\bar\partial_t)$,
\begin{equation}\label{bbb}
(\bar\partial_t^\ast\partial_t^\ast\gamma_t-i\Psi_t)\bot_{\omega_t}
\ker (i\partial_t\bar\partial_t).
\end{equation}
Combining (\ref{aaa}) with (\ref{bbb}), we obtain
$\bar\partial_t^\ast\partial_t^\ast\gamma_t-i\Psi_t=0$, which is the
first identity in (\ref{ccc}). The second in (\ref{ccc}) follows
from the second equality of (\ref{aaa}), since
\begin{equation*} 0=\int_{X_t}\langle \partial_t^\ast(\bar\partial_t\bar\partial_t^\ast+1)
\partial_t\gamma_
t,\gamma_t\rangle=\int_{X_t}(|
\bar\partial_t^\ast\partial_t\gamma_t| ^2+|
\partial_t\gamma_t| ^2).
\end{equation*}

Now we can estimate $\parallel
i\partial_t\Psi_t\parallel_{C^0(\omega_t)}$ by elliptic estimates.
Since the background metric $\omega_t$ varies smoothly with $t$ and
approaches $\omega_0$ as $t\to 0$,  we have a uniform constant $C$
such that
\begin{equation}
\parallel i\partial_t\Psi_t\parallel_{C^0(\omega_t)}\leq C\parallel
\gamma_t\parallel_{C^3(\omega_t)}\leq
C\parallel\partial_t\Omega_t^{n-2,n}\parallel_{C^{0,\alpha}(\omega_t)}
\end{equation}
for some $0< \alpha <1$. Since also $\Omega_t^{n-2,n}$ varies
smoothly with $t$ and approaches zero as $t\to 0$  and the complex
structure on $X_t$ varies analytically with $t$,
$\parallel\partial_t\Omega_t^{n-2,n}\parallel_{C^{0,\alpha}(\omega_t)}$
approaches zero uniformly as $t\to 0$. Thus $\parallel
i\partial_t\Psi_t\parallel_{C^{0}(\omega_t)}$ approaches zero as
$t\to 0$. Therefore $\tilde\Omega_t$ is strictly positive definite
when $t$ is small enough.

\end{document}